\newcommand{\F}{\mathcal F}
\newcommand{\A}{\mathcal A}
\newcommand{\IR}{\mathbb R}
\newcommand{\diam}{\mathrm{diam}}
\newcommand{\conv}{\mathrm{conv}}
\newcommand{\SO}{\mathsf{SO}}
\newcommand{\la}{\langle}
\newcommand{\ra}{\rangle}
\newtheorem{theorem}{Theorem}
\newtheorem{corollary}{Corollary}
\newtheorem{problem}{Problem}
\newtheorem{claim}{Claim}
\newtheorem{example}{Example}
\newtheorem{lemma}{Lemma}
\title[The groups $S^3$ and $\SO(3)$ have no invariant binary $k$-network]{The groups $S^3$ and $\SO(3)$\\ have no invariant binary $k$-network}
\author{Taras Banakh and S\l awomir Turek}
\address{Department of Mathematics, Ivan Franko National University of Lviv, Ukraine}
\address{Instytut Matematyki, Uniwersytet Humanistyczno-Przyrodniczy Jana Kochanowskiego, Kielce, Poland}
\email{t.o.banakh@gmail.com, sturek@ujk.edu.pl}
\subjclass{54D30; 22C05}
\keywords{Compact topological group, supercompact space, invariant binary $k$-network}
\begin{document}
\begin{abstract} A family $\mathcal N$ of closed subsets of a topological space $X$ is called a {\em closed $k$-network} if for each open set $U\subset X$ and a compact subset $K\subset U$ there is a finite subfamily $\mathcal F\subset\mathcal N$ with $K\subset\bigcup\F\subset \mathcal N$. A compact space $X$ is called {\em supercompact} if it admits a
closed $k$-network $\mathcal N$ which is {\em binary} in the sense that each linked subfamily $\mathcal L\subset\mathcal N$ is centered. A closed $k$-network $\mathcal N$ in a topological group $G$ is {\em invariant} if $xAy\in\mathcal N$ for each $A\in\mathcal N$ and $x,y\in G$. According to a result of Kubi\'s and Turek \cite{KT}, each compact (abelian) topological group admits an (invariant) binary closed $k$-network. In this paper we prove that the compact topological groups $S^3$ and $\SO(3)$ admit no invariant binary closed $k$-network.
\end{abstract}
\maketitle

\section{Introduction}

In this note we shall discuss the problem of the existence of invariant binary $k$-networks for compact $G$-spaces and compact topological groups.

A family $\A$ of subsets of a set $X$ is called
\begin{itemize}
\item {\em linked} if $A\cap B\ne\emptyset$ for all $A,B\in\A$;
\item {\em centered} if $\cap \F\ne\emptyset$ for any finite subfamily $\F\subset\A$;
\item {\em binary} if each linked subfamily of $\F$ is centered.
\end{itemize}

A family $\A$ of subsets of a topological space $X$ is called a {\em $k$-network} if for any open set $U\subset X$ and a compact subset $K\subset U$ there is a finite subfamily $\F\subset\A$ with $K\subset \cup\F\subset U$, see \cite[\S11]{Gru}. If each set $A\in\A$ of a $k$-network is closed in $X$, then $\A$ will be called a {\em closed $k$-network}.

A compact space $X$ is called {\em supercompact} if $X$ admits a subbase  of the topology such that
each cover of $X$ by elements of the subbase contains a two-element subcover, see \cite{vM}. The following useful characterization of the supercompactness can be derived from Lemma 3.1 of \cite{KT}:

\begin{theorem} A compact Hausdorff space $X$ is supercompact if and only if $X$ admits a binary closed $k$-network.
\end{theorem}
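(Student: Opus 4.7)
I would prove the equivalence by establishing the two implications separately, with the ``if'' direction following directly from the definitions and the ``only if'' direction relying on the construction encoded in Lemma~3.1 of \cite{KT}.

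For the implication $(\Leftarrow)$, suppose $X$ admits a binary closed $k$-network $\mathcal{N}$. I would first show that $\mathcal{N}$ is itself a closed subbase of $X$: given any closed $F\subset X$ and $x\in X\setminus F$, the set $F$ is compact (being closed in the compact space $X$) and is contained in the open set $X\setminus\{x\}$, so the $k$-network property yields a finite $\F\subset\mathcal{N}$ with $F\subset\bigcup\F\subset X\setminus\{x\}$. Hence finite unions of $\mathcal{N}$-elements form a base for the closed sets of $X$. Combined with binariness, a standard compactness argument shows that the dual family of open complements is an open subbase with the two-element subcover property: if some cover by such open sets had no two-element subcover, the complementary subbase sets would be pairwise intersecting, hence linked, hence centered by binariness, and by compactness of $X$ would have a common point left uncovered.

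For the implication $(\Rightarrow)$, suppose $X$ is supercompact and fix a binary closed subbase $\mathcal{S}$. The natural candidate is the family $\mathcal{N}$ of all finite intersections of elements of $\mathcal{S}$. Binariness of $\mathcal{N}$ transfers from that of $\mathcal{S}$: given a linked $\mathcal{L}\subset\mathcal{N}$, each $L\in\mathcal{L}$ is $\bigcap\F_L$ for some finite $\F_L\subset\mathcal{S}$, and the union $\bigcup_L\F_L$ is itself linked, since any $S\in\F_{L_1}$ and $T\in\F_{L_2}$ satisfy $S\cap T\supset L_1\cap L_2\ne\emptyset$; binariness of $\mathcal{S}$ then makes every finite subfamily of $\mathcal{L}$ have nonempty intersection.

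The central obstacle is verifying that $\mathcal{N}$ is a $k$-network. Using the Hausdorff property and the subbase structure, for any $x\ne y$ one obtains $S\in\mathcal{S}$ with $x\in S$ and $y\notin S$; hence for a compact $K\subset U$ and each $x\in K$, covering the compact set $X\setminus U$ by complements of such separating subbase sets produces an $N_x\in\mathcal{N}$ with $x\in N_x\subset U$. The delicate step is then extracting a \emph{finite} subcover of $K$ from $\{N_x\}_{x\in K}$, since arbitrary closed covers of compact spaces need not admit finite subcovers; one has to arrange for $N_x$ to contain an open neighborhood of $x$, which demands a refined separation argument that genuinely exploits the binariness of $\mathcal{S}$ (not merely the subbase property). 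This refinement is exactly the content of Lemma~3.1 of \cite{KT}, and once it is in hand the standard compactness of $K$ delivers the required finite subfamily of $\mathcal{N}$.
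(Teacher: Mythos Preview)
The paper does not actually prove this theorem: it simply states that the characterization ``can be derived from Lemma~3.1 of \cite{KT}'' and moves on. Your proposal is therefore more detailed than anything in the paper itself, and it is consistent with the paper's deferral to \cite{KT}.

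Your $(\Leftarrow)$ direction is correct and self-contained: the $k$-network property together with compactness and Hausdorffness does make the complements of $\mathcal{N}$ an open subbase, and the binariness-plus-compactness argument for the two-element subcover property is the standard one. For $(\Rightarrow)$ you correctly isolate the genuine difficulty---that the closed sets $N_x$ you build need not have interior, so compactness of $K$ alone does not yield a finite subfamily---and you attribute its resolution to Lemma~3.1 of \cite{KT}, exactly as the paper does. So your proposal and the paper's treatment are in agreement; you have simply spelled out the easy direction and the reduction of the hard direction, whereas the paper cites \cite{KT} for the whole statement without further comment.
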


In \cite{Mills} C.Mills proved that each compact topological group $G$ is supercompact, that is  $G$ admits a binary closed $k$-network $\mathcal N$. This result was reproved by W.Kubi\'s and S.Turek \cite{KT} who observed that for an abelian compact topological group $G$ one can construct $\mathcal N$ so that it is {\em left-invariant} in the sense that $xA\in\mathcal N$ for each $A\in\mathcal N$ and $x\in G$. They also asked if such a left-invariant binary $k$-network can be constructed in each compact topological group.

It is natural to consider this problem in the more general context of $G$-spaces.
By a $G$-space we understand a topological space $X$ endowed with a continuous action $\alpha:G\times X\to X$ of a topological group $G$.
A family $\F$ of subsets of a $G$-space $X$ will be called {\em $G$-invariant} if $gF\in\F$ for each $F\in\F$ and each $g\in G$.

A compact $G$-space $X$ will be called {\em $G$-supercompact} if $X$ admits a $G$-invariant binary closed $k$-network.

\begin{problem} Which compact $G$-spaces are $G$-supercompact?
\end{problem}

We shall resolve this problem for the unit sphere $S^n=\{x\in\IR^{n+1}:\|x\|=1\}$ in the Euclidean space $\IR^{n+1}$, endowed with the natural action of the group $\SO(n+1)$ (of orientation preserving linear isometries of $\IR^{n+1}$).

\begin{example}\label{ex}
\begin{enumerate}
\item The $0$-sphere $S^0=\{-1,1\}$ in $\IR$ is $\SO(1)$-supercompact because the family $\F_0=\{\{-1\},\{1\}\}$ of singletons in an $\SO(1)$-invariant binary closed $k$-network for $S^0$.
\item The 1-sphere $S^1$ is $\SO(2)$-supercompact because the family $\F_1$ of all closed connected subsets of diameter less than $\sqrt{3}$ in $S^1$ is an $\SO(2)$-invariant binary closed $k$-network for the circle $S^1$.
\end{enumerate}
\end{example}

It turns out that $S^0$ and $S^1$ are the unique examples of $\SO(n+1)$-supercompact spheres $S^n$.

\begin{theorem}\label{main} The unit sphere $S^n$ in the Euclidean space $\IR^{n+1}$ is $\SO(n+1)$-supercompact if and only if $n\le 1$.
\end{theorem}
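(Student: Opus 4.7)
The case $n\le 1$ is Example~\ref{ex}. For the converse, suppose for contradiction that $\mathcal N$ is an $\SO(n+1)$-invariant binary closed $k$-network on $S^n$ for some $n\ge 2$.

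\emph{The core case $n=2$.} Fix three equatorial points $p_1,p_2,p_3\in S^2$ at mutual angular distance $2\pi/3$, and let $e_i$ denote the minor equatorial arc opposite $p_i$ (so $p_{i-1},p_{i+1}$ are its endpoints, indices mod $3$). For small $\e>0$, applying the $k$-network property to the compact arc $e_1$ inside its open $\e$-tube $U_1^\e=\{x\in S^2:d(x,e_1)<\e\}$ produces a finite $\mathcal F_1\subseteq\mathcal N$ with $e_1\subseteq\bigcup\mathcal F_1\subseteq U_1^\e$; by $\SO(3)$-invariance, its $2\pi/3$- and $4\pi/3$-rotates $\mathcal F_2,\mathcal F_3$ give analogous covers of $e_2,e_3$. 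The strategy is to select a single $A_1\in\mathcal F_1$ that contains both endpoints $p_2,p_3$ of $e_1$, so that its $\SO(3)$-rotates $A_2,A_3$ cover $e_2,e_3$ and contain $\{p_3,p_1\}$ and $\{p_1,p_2\}$ respectively. Then $\{A_1,A_2,A_3\}$ is pairwise intersecting (each pair shares a vertex), hence linked, while $A_1\cap A_2\cap A_3\subseteq U_1^\e\cap U_2^\e\cap U_3^\e$, which is empty for all $\e<\pi/3$ (since any common point would lie within $\e$ of all three arcs, contradicting the distance $\pi/3$ from each vertex to its opposite arc). This contradicts binariness.

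\emph{Reduction of $n\ge 3$ to $n=2$.} Fix a $3$-dimensional subspace $V\subseteq\IR^{n+1}$ and let $S^2=S^n\cap V$. The subgroup $H\cong\SO(3)\le\SO(n+1)$ acts on $V$ as its rotation group, while the commuting subgroup $K\cong\SO(n-2)$ acts on $V^\perp$ and fixes $S^2$ pointwise. From $\mathcal N$ I would extract an $H$-invariant binary closed $k$-network on $S^2$ by restricting to $S^2$ those members of $\mathcal N$ that are $K$-invariant; such members are obtained from arbitrary $A\in\mathcal N$ by $K$-invariant refinements (e.g., taking the $K$-invariant cores $\bigcap_{g\in K}gA$ when these lie in $\mathcal N$, or else applying the $k$-network property to $K$-invariant compact/open pairs in $S^n$). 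The $K$-invariance is essential for transferring the binary property: common intersections of linked subfamilies are forced to be $K$-invariant, hence meet the $K$-fixed set $S^2$.

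\emph{Main obstacle.} The principal difficulty is selecting a single set $A_1\in\mathcal F_1$ that spans the arc $e_1$ from endpoint to endpoint inside the thin tube $U_1^\e$: the $k$-network property only guarantees a finite cover, not a single spanning set. I expect to overcome this either by passing to a Hausdorff limit along a sequence $\e_k\to 0$ of elements of $\mathcal F_1^{(\e_k)}$ whose intersection with $e_1$ is maximal in arc length, using compactness of the hyperspace of closed subsets of $S^2$ together with $\SO(3)$-invariance to produce the required spanning member of $\mathcal N$; or, should such a limit argument fail (since $\mathcal N$ need not be Hausdorff-closed), by replacing the three-element linked family with a cyclic ``necklace'' of overlapping sets drawn from $\mathcal F_1,\mathcal F_2,\mathcal F_3$ and their rotates, engineered so that pairwise overlaps are forced by consecutive position along the equator and by $\SO(3)$-symmetry at the shared vertices, while the common intersection is still trapped in the empty intersection $U_1^\e\cap U_2^\e\cap U_3^\e$.
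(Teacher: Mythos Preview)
Your plan has a genuine gap, and it is exactly the one you flag as the ``main obstacle'': nothing in the $k$-network axiom forces a single member of $\mathcal N$ to span the arc $e_1$ from $p_2$ to $p_3$. Neither of your proposed fixes closes this gap. The Hausdorff-limit idea fails for the reason you note: $\mathcal N$ is an arbitrary closed $k$-network, not assumed closed in the hyperspace, so a limit of members need not belong to $\mathcal N$. The ``necklace'' idea fails for a more basic reason: \emph{linked} means every \emph{pair} intersects, not just consecutive pairs. A chain $B_1,\dots,B_k$ of small sets marching around the equator with $B_i\cap B_{i+1}\ne\emptyset$ will typically have $B_i\cap B_j=\emptyset$ for $|i-j|\ge 2$, so the family is not linked and the binary hypothesis gives you nothing. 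Forcing all pairs to meet would push every $B_i$ to contain the three vertices $p_1,p_2,p_3$, which is precisely the spanning property you could not obtain in the first place.

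The paper avoids this obstacle by reversing the order of choices. Rather than fixing the rotation $f$ in advance and then searching $\mathcal N$ for a set adapted to it, the paper first picks \emph{any} $A\in\mathcal N$ with $0<\diam(A)\le 1$ (such $A$ exists by the $k$-network property applied to a small ball), and then manufactures the rotation from the internal geometry of $A$. The key lemma locates two points $a_0,a_1\in A$ and a unit vector $b$ with $\la b,a_0\ra=0=\max_{a\in A}\la b,a\ra$ and $\la b,a_1\ra>-\tfrac12\|a_1-a_0\|$; this is obtained by a Mazur-type smooth-point/support-functional argument on $\conv(A\cup\{0\})$. One then completes $a_0,a_1$ to an equilateral triangle $a_0,a_1,a_2$ in a $3$-plane $L$, takes $f$ to be the order-$3$ rotation about the centroid axis (extended by the identity on $L^\perp$), and checks that $\{A,f(A),f^2(A)\}$ is linked via the vertices while a half-space cut using $b$ and the centroid $c$ shows the triple intersection is empty. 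This handles all $n\ge 2$ at once, so your separate reduction from $n\ge 3$ to $n=2$ --- itself problematic, since nothing guarantees $K$-invariant members of $\mathcal N$ --- is unnecessary.
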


This theorem will be proved in Section~\ref{s:p}. Now we shall apply this theorem for finding an example of a compact topological group that admits no invariant binary closed $k$-network.

A family $\F$ of subsets of a group $G$ will be called
\begin{itemize}
\item {\em left-invariant} (resp. {\em right-invariant}) if for each $F\in\F$ and $g\in G$ we get $gF\in\F$ (resp. $Fg\in\F$);
\item {\em invariant} if $\F$ is both left-invariant and right-invariant.
\end{itemize}

It is well-known that the 3-dimensional sphere $S^3$ has the structure of a compact topological group. Namely, $S^3$ is a group with respect to the operation of multiplication of quaternions (with unit norm). It is known \cite[\S4.1]{CS} that for each isometry $f\in \SO(4)$ of $S^3$ there are quaternions $a,b\in S^3$ such that $f(x)=axb$ for all $x\in S^3$. This implies that a family $\F$ of subsets of the group $S^3$ is invariant if and only if it is $\SO(4)$-invariant. Now we see that Theorem~\ref{main} implies:

\begin{corollary}\label{c1} The compact topological group $S^3$ admits no invariant binary closed $k$-network.
\end{corollary}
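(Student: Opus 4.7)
The plan is to deduce Corollary \ref{c1} immediately from Theorem \ref{main}, using the classical quaternionic parametrization of $\SO(4)$ to convert the two-sided group-theoretic invariance of a $k$-network on $S^3$ into geometric $\SO(4)$-invariance. I would proceed by contradiction: assume that $\mathcal N$ is an invariant binary closed $k$-network on the topological group $S^3$, so $xAy\in\mathcal N$ for every $A\in\mathcal N$ and every $x,y\in S^3$.

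The key step is to invoke the fact recalled from \cite[\S4.1]{CS}: every isometry $f\in\SO(4)$ of the unit sphere $S^3\subset\IR^4$ has the form $f(x)=axb$ for some unit quaternions $a,b\in S^3$. Combined with the two-sided invariance of $\mathcal N$, this yields $f(A)=aAb\in\mathcal N$ for every $A\in\mathcal N$ and every $f\in\SO(4)$, so $\mathcal N$ is in fact $\SO(4)$-invariant. Thus $\mathcal N$ witnesses that $S^3$ is $\SO(4)$-supercompact, contradicting the ``only if'' direction of Theorem \ref{main} applied with $n=3>1$.

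At the level of this corollary there is really no serious obstacle; the entire geometric substance has been offloaded into Theorem \ref{main}, which is deferred. The only nontrivial input at this stage is the quaternionic description of $\SO(4)$, and in particular the surjectivity of the homomorphism $S^3\times S^3\to\SO(4)$ given by $(a,b)\mapsto (x\mapsto axb)$. Once this is accepted, group-invariance and $\SO(4)$-invariance of families of subsets of $S^3$ coincide, and the corollary becomes a one-line consequence of Theorem \ref{main}. The real work, and the genuine obstacle, will lie in the proof of Theorem \ref{main} itself (Section~\ref{s:p}), where one must actually show that no $\SO(n+1)$-invariant binary closed $k$-network can exist on $S^n$ for $n\ge 2$.
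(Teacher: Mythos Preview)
Your proposal is correct and follows exactly the paper's own argument: the paper also observes, citing \cite[\S4.1]{CS}, that every $f\in\SO(4)$ acts on $S^3$ as $x\mapsto axb$ for some $a,b\in S^3$, so two-sided invariance of a family coincides with $\SO(4)$-invariance, and then Theorem~\ref{main} with $n=3$ rules out any invariant binary closed $k$-network. There is nothing to add or correct.
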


It is known that the quotient group $S^3/\{-1,1\}$ of $S^3$ by the two-element subgroup $\{-1,1\}$ is isomorphic to the special orthogonal group $\SO(3)$.
Using this fact, we can deduce from Corollary~\ref{c1} the following:

\begin{corollary}\label{c2} The compact topological group $\SO(3)$ admits no invariant binary closed $k$-network.
\end{corollary}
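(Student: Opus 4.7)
The plan is to derive Corollary~\ref{c2} from Corollary~\ref{c1} by pulling back. Suppose toward contradiction that $\mathcal N$ is an invariant binary closed $k$-network on $\SO(3)$, and let $q\colon S^{3}\to\SO(3)=S^{3}/\{-1,1\}$ be the quotient homomorphism, whose only non-trivial deck transformation is the antipodal map $\sigma(x)=-x$. I would define $\tilde{\mathcal N}=\tilde{\mathcal N}_{1}\cup\tilde{\mathcal N}_{2}$ on $S^{3}$, where $\tilde{\mathcal N}_{1}=\{q^{-1}(A):A\in\mathcal N\}$ consists of full preimages and $\tilde{\mathcal N}_{2}=\{B\subset S^{3}\text{ closed}:q(B)\in\mathcal N\text{ and }q|_{B}\text{ is injective}\}$ consists of single-sheet lifts. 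Bi-invariance of $\tilde{\mathcal N}$ follows from the invariance of $\mathcal N$ combined with the centrality of $-1$: for $\tilde{\mathcal N}_{1}$, the identity $xq^{-1}(A)y=q^{-1}(q(x)Aq(y))$ does the job, while for $B\in\tilde{\mathcal N}_{2}$ one checks that $q|_{xBy}$ remains injective since $xby=-xb'y$ forces $b=-b'\in B\cap\sigma(B)=\emptyset$. Closedness is immediate.

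For the $k$-network property, given a compact set $K\subset U$ in $S^{3}$, I would use that $q$ is a covering map to cover $K$ by finitely many open sets $V_{1},\ldots,V_{n}\subset U$, each either satisfying $V_{i}\cup\sigma(V_{i})\subset U$ or $\overline{V_{i}}\cap\sigma(\overline{V_{i}})=\emptyset$ (so that $q|_{\overline{V_{i}}}$ is a homeomorphism onto its image). Shrinking, write $K=\bigcup_{i}K_{i}$ with $K_{i}$ compact in $V_{i}$. For each $i$, the $k$-network property of $\mathcal N$ applied to $q(K_{i})\subset q(V_{i})$ produces a finite $\mathcal F_{i}\subset\mathcal N$ with $q(K_{i})\subset\bigcup\mathcal F_{i}\subset q(V_{i})$; pulling these $\mathcal F_{i}$ back via $q^{-1}$ for the first type of piece (landing in $\tilde{\mathcal N}_{1}$) and via the local inverse $(q|_{\overline{V_{i}}})^{-1}$ for the second type (landing in $\tilde{\mathcal N}_{2}$) yields a finite subfamily of $\tilde{\mathcal N}$ covering $K$ inside $U$.

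Binarity is the main obstacle. Given a finite linked $\mathcal L\subset\tilde{\mathcal N}$, the projected family $\{q(L):L\in\mathcal L\}\subset\mathcal N$ is linked and so, by binarity of $\mathcal N$, centered; let $C=\bigcap_{L\in\mathcal L}q(L)\neq\emptyset$. Each $L\in\tilde{\mathcal N}_{1}\cap\mathcal L$ contains the full fibre $q^{-1}(C)$, whereas each $L\in\tilde{\mathcal N}_{2}\cap\mathcal L$ restricts to a continuous section $s_{L}\colon C\to q^{-1}(C)$ of $q$. The crux is to find $a\in C$ and a lift $p\in q^{-1}(a)$ with $s_{L}(a)=p$ for every sheet $L$: since two linked sheets $L_{1},L_{2}\in\tilde{\mathcal N}_{2}$ must agree at some point of $q(L_{1})\cap q(L_{2})$, and any two sections of the two-fold covering differ by a locally constant $\{\pm 1\}$-valued function, I expect these pairwise agreements to propagate through the connected components of $C$ and yield a common lift in $\bigcap\mathcal L$. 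Granting this, $\tilde{\mathcal N}$ is an invariant binary closed $k$-network on $S^{3}$, contradicting Corollary~\ref{c1}, and hence no such $\mathcal N$ can exist on $\SO(3)$.
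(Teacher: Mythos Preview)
The paper does not spell out a proof of Corollary~\ref{c2}, so there is no detailed argument to compare with; the task is simply to check whether your reduction to Corollary~\ref{c1} goes through. It does not: the binarity of $\tilde{\mathcal N}$ fails in general, and your ``propagation'' heuristic is false.

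Here is a concrete obstruction. Suppose some $A\in\mathcal N$ lies in an evenly covered open set and decomposes as a disjoint union of three nonempty clopen pieces $A=A^{(1)}\sqcup A^{(2)}\sqcup A^{(3)}$. Writing $q^{-1}(A^{(k)})=A^{(k)}_{+}\sqcup A^{(k)}_{-}$ for the two sheets, the three sets
\[
B_{1}=A^{(1)}_{+}\cup A^{(2)}_{+}\cup A^{(3)}_{-},\qquad
B_{2}=A^{(1)}_{+}\cup A^{(2)}_{-}\cup A^{(3)}_{+},\qquad
B_{3}=A^{(1)}_{-}\cup A^{(2)}_{+}\cup A^{(3)}_{+}
\]
all lie in your $\tilde{\mathcal N}_{2}$ (each is closed, projects onto $A\in\mathcal N$, and is a single sheet). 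They are pairwise linked, since $B_{1}\cap B_{2}\supset A^{(1)}_{+}$, $B_{1}\cap B_{3}\supset A^{(2)}_{+}$, $B_{2}\cap B_{3}\supset A^{(3)}_{+}$; yet $B_{1}\cap B_{2}\cap B_{3}=\emptyset$. In the language of your argument, $C=\bigcap q(B_i)=A$ and the three sections $s_{B_i}$ agree pairwise, but each pair agrees on a \emph{different} component of $C$, so no common lift exists. Since a hypothetical invariant binary closed $k$-network $\mathcal N$ on $\SO(3)$ is allowed to contain such disconnected members, your family $\tilde{\mathcal N}$ need not be binary, and the contradiction with Corollary~\ref{c1} is not reached.

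One way to repair the argument is to bypass the global lift entirely and work locally. Choose $A\in\mathcal N$ with $0<\diam(A)$ small, lift it to a single sheet $\tilde A\subset S^{3}$, and apply the paper's unnumbered theorem to $\tilde A$ to obtain $f\in\SO(4)$, $f(x)=axb$, with $\{\tilde A,f(\tilde A),f^{2}(\tilde A)\}$ linked but not centered. The construction of $f$ in that proof (a rotation by $2\pi/3$ about an axis through the centroid of a triangle of side $\le\diam(\tilde A)$) guarantees that $\tilde A\cup f(\tilde A)\cup f^{2}(\tilde A)$ has diameter at most $3\diam(\tilde A)$; taking $\diam(A)<\tfrac{2}{3}$ ensures this union meets neither its antipodal image nor any of $-\tilde A,-f(\tilde A),-f^{2}(\tilde A)$. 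It follows that the projected triple $\{A,\,q(a)Aq(b),\,q(a)^{2}Aq(b)^{2}\}\subset\mathcal N$ is linked but not centered in $\SO(3)$, contradicting binarity of $\mathcal N$.
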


\begin{problem} Has the group $S^3$ or $\SO(3)$  a left-invariant binary $k$-network?
\end{problem}

\begin{problem} Let $G$ be a compact abelian group and $X$ is a compact metrizable $G$-space. Is $X$ $G$-supercompact?
\end{problem}

\begin{problem} Let $G$ be a metrizable (separable) abelian topological group. Has $G$ an invariant binary closed $k$-network?
\end{problem}

\section{Proof of Theorem~\ref{main}}\label{s:p}

First we fix some natation. By $\la\mathbf x,\mathbf y\ra=\sum_{i=1}^nx_iy_i$ we denote the standard inner product of the Euclidean space $\IR^n$. This inner product generates the norm $\|\mathbf x\|=\la \mathbf x,\mathbf x\ra$.
By $S^n=\{x\in\IR^{n+1}:\|x\|=1\}$ we shall denote the unit sphere in  $\IR^{n+1}$.

For an Euclidean space $E=\IR^n$ let $E^*$ be the dual space of $E$, i.e., the space of linear functionals on $E$ endowed with the sup-norm.
By Riesz's Representation Theorem, for each functional $y^*\in E^*$ there is a unique vector $y\in E$ such that $y^*(x)=\la y,x\ra$. So we can identify $E^*$ with $E$.

A {\em convex body} in an Euclidean space $E$ is a convex subset $C\subset E$ with non-empty interior in $E$. By $\partial C$ we denote the boundary of $C$ in $E$.

A functional $y^*\in E^*$ will be called a {\em support functional} to $C$ at a point $c\in\partial C$ if $$y^*(c)=\max x^*(C)>\inf y^*(C).$$ By the Hahn-Banach Theorem, each point $c\in\partial C$ of a convex body $C\subset E$ has a support functional $y^*$ with unit norm.  If such a support functional is unique, then $c$ is called a {\em smooth point} of $\partial C$. It follows from the classical Mazur's Theorem on the differentiablity of continuous convex functions on $E$ that the set of smooth points is dense in $\partial C$.

In an obvious way Theorem~\ref{main} follows from Example~\ref{ex} and the following theorem:

\begin{theorem} For any $n\ge 2$ and any closed subset $A\subset S^n$ of diameter $0<\diam(A)\le 1$ there is an isometry $f\in \SO(n+1)$ such that the family $\{A,f(A),f^2(A)\}$ is linked but not centered.
\end{theorem}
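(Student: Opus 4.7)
My plan is to construct $f$ by a continuity argument using the geometry of the convex hull $C := \conv(A) \subset \IR^{n+1}$. First observe that the hypothesis $\diam(A) \le 1$ gives $\|p-q\|^2 = 2 - 2\la p, q\ra \le 1$ for all $p, q \in A$, hence $\la p, q\ra \ge 1/2$. Consequently, for any fixed $a_0 \in A$ we have $\la a_0, x\ra \ge 1/2 > 0$ for every $x \in C$, so $0 \notin C$ and $C$ is a genuine convex body in its affine hull, strictly lying in an open half-space through the origin. By the Mazur density statement recalled in the introduction, the smooth points of $\partial C$ form a dense subset; fix such a smooth point $c$ with unique unit outward support direction $y \in S^n$.

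Next, I would choose a 2-plane $V \subset \IR^{n+1}$ containing $y$ together with a tangent direction at $c$ selected with reference to the local face structure, and consider the one-parameter family $f_t \in \SO(n+1)$ of rotations by angle $t$ in $V$, extended by the identity on $V^\perp$. At $t = 0$ all three sets $A$, $f_t(A)$, $f_t^2(A)$ coincide. Define
\[
t_0 := \inf\{t > 0 : A \cap f_t(A) \cap f_t^2(A) = \emptyset\},
\qquad
t_1 := \inf\{t > 0 : A \cap f_t(A) = \emptyset \text{ or } A \cap f_t^2(A) = \emptyset\}.
\]
The goal is to show $t_0 < t_1$ for the chosen $V$; then for any $t \in (t_0, t_1)$, the rotation $f := f_t$ gives empty triple intersection but non-empty pairwise intersections, and since $f(A) \cap f^2(A) = f(A \cap f(A))$, the family $\{A, f(A), f^2(A)\}$ is linked but not centered.

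The central difficulty, and the main obstacle, is proving the strict inequality $t_0 < t_1$. This is where the smoothness of $c$ plays an essential role: the uniqueness of $y$ forces the three support normals $y$, $f_t(y)$, $f_t^2(y)$ to diverge from one another at first order in $t$, so the three corresponding ``top faces'' of $A$ separate immediately as $t$ grows past $0$. Consequently the triple intersection collapses transversely at $t = t_0$ through a low-dimensional set rather than along a whole face of $\partial C$, while pairwise intersections retain positive Hausdorff measure past the critical value. Without smoothness the three sets could in principle be pinned together on a shared facet throughout a neighborhood of $t_0$, and pairwise and triple intersections would vanish simultaneously (this is precisely what happens, for instance, for rotations in the plane of a great-circle arc). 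The diameter bound $\diam(A) \le 1$ provides the quantitative room — confining $A$ to a cap of angular radius less than $\pi/3$ — that lets the rotation realize the required separation angles while preserving pairwise overlap; making the transversality precise and choosing the second direction of $V$ to exclude the degenerate configurations is the technical heart of the argument.
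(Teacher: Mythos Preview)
Your proposal is an outline, not a proof: the entire content of the theorem is the inequality $t_0 < t_1$, and you explicitly leave it unproved, offering only a heuristic about ``first-order divergence'' of support normals. That heuristic is not convincing as stated. Take the simplest test case, a spherical cap $A \subset S^2$ of angular radius $r$, and rotate around an axis through the great circle containing its center $p$. The centers $p, f_t(p), f_{2t}(p)$ lie in order along that great circle, and one checks that $A \cap f_{2t}(A) \subset f_t(A)$ whenever the left side is nonempty; hence $A \cap f_t(A) \cap f_{2t}(A) = A \cap f_{2t}(A)$ and $t_0 = t_1$ exactly. So for this $V$ the method fails, yet the cap has a perfectly smooth boundary with a unique support normal everywhere --- smoothness of $\partial C$ alone does not force $t_0 < t_1$. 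You acknowledge that the choice of the second spanning vector of $V$ is where the work lies, but you give no prescription for it and no argument that a good choice exists for arbitrary $A$.

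The paper proceeds quite differently and avoids any continuity or limiting argument. It first isolates two specific points $a_0, a_1 \in A$ together with a unit functional $b$ satisfying $\la b, a_0\ra = 0 = \max_{a\in A}\la b,a\ra$ and $\la b, a_1\ra > -\tfrac12\|a_1 - a_0\|$; Mazur's theorem enters here, but only to locate $a_0$ on the boundary of $\conv(A \cup \{0\})$, not to drive a transversality argument. It then completes $a_0, a_1$ to an equilateral triangle $a_0, a_1, a_2$ on $S^n$ inside a three-dimensional subspace $L$, and takes $f$ to be the single explicit rotation by $2\pi/3$ about the axis through the centroid $c = \tfrac13(a_0+a_1+a_2)$, extended by the identity on $L^\perp$. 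Linkedness is immediate because $f$ cyclically permutes $a_0, a_1, a_2$ while $a_0, a_1 \in A$. Non-centeredness is shown by a half-space argument: $A \subset \{\la b,\cdot\ra \le 0\}$ and $A \subset \{\la c,\cdot\ra > 0\}$, and the intersection of the three rotates of the first half-space with the $f$-invariant second half-space is empty. No infimum, no limiting $t$, no transversality analysis is needed.
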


\begin{proof} Let $E=\IR^{n+1}$ and $E^*$ be the dual space to $E$. By $S^*$ we denote the unit sphere in $E^*$.

\begin{lemma}\label{l1} There are distinct points $a_0,a_1\in A$ and a vector $b\in S^*$ such that $\la b,a_0\ra=0=\max_{a\in A}\la b,a\ra$ and $\la b,a_1\ra>-\frac12\|a_1-a_0\|$.
\end{lemma}

\begin{proof} The lemma trivially holds if there are a vector $b\in S^*$ and two distinct points $a_0,a_1\in A$ such that  $\la b,a_0\ra=\la b,a_1\ra =\max_{a\in A}\la b,a\ra=0$.

So, assume that no such vectors $b$, $a_0,a_1$ exist.
Let $L_A$ be the linear hull of the set $A$ and $C\subset L_A$  be the closed convex hull of the set $A\cup\{0\}$ in $L_A$. Since the set $A\subset S^n$ contains more than one point, the linear space $L_A$ has dimension $\dim L_A\ge 2$. It is clear that $C$ is a convex body in $L_A$.
By Mazur's Theorem, the set of smooth points is a dense in the boundary $\partial C$. Consequently, there is a smooth point $c\in \partial C$ such that $0<\|c\|<1$. Let $b^*\in L_A^*$ be the unique norm one support functional to $C$ at the point $c$. Let $a_0=\dfrac{c}{\|c\|}$ and observe that $a_0\in \conv(A)\subset C$. Since $b^*$ is a support functional at $c$, we get
$b^*(c)=\max b^*(C)\ge b^*(0)=0$. We claim that $b^*(c)=0$. The strict inequality $b^*(c)>0$ would imply $b^*(c)=0=\max b^*(C)\ge b^*(a_0)=\dfrac{b^*(c)}{\|c\|}$ and $\|c\|\ge 1$, which contradicts the choice of $c$.

Let us show that the point $a_0=c/\|c\|$ belongs to the set $A$.
Since $c\in \conv(A\cup\{0\})\setminus\{0\}$ by the Caratheodory Theorem, there are pairwise distinct points $a_1,\dots,a_k\in A$ and positive real numbers
$\lambda_1,\dots,\lambda_k$ such that $\sum_{i=1}^k\lambda_i\le 1$ and $c=\sum_{i=1}^k\lambda_ia_i$. This equality and $b^*(c)=0=\max b^*(A)$ imply that $b^*(a_i)=0$ for all $1\le i\le k$. Now our assumption guarantees that $k=1$ (otherwise, $a_1$ and $a_2$ are two distinct points with $b^*(a_1)=b^*(a_2)=\max b^*(A)=0$, which is forbidden by our assumption). Therefore, $c=\lambda_1a_1$ and hence $a_0=c/\|c\|=c/\lambda_1=a_1\in A$.
\smallskip

Let $c^*\in L_A^*$ be any functional with unit norm such that $c^*(a_0)=0$ and $0<\|b^*-c^*\|\le \frac12$. Since the functional $c^*\ne b^*$ is not support at the point $c$, there is a point $a_1\in A$ such that $c^*(a_1)>0$.

Observe that
$$
\begin{aligned}
b^*(a_1)&= b^*(a_1-a_0)\ge c^*(a_1-a_0)-\|c^*-b^*\|\cdot\|a_1-a_0\|=\\
&=c^*(a_1)-\frac12\|a_1-a_0\|>-\frac12\|a_1-a_0\|
\end{aligned}.$$

By Riesz's Representation Theorem, the functional $b^*$ can be identified with a unique vector $b\in L_A\subset E$ such that $b^*(x)=\la b,x\ra$ for all $x\in L$. The vector $b$ and the points $a_0,a_1$ have the properties required in Lemma~\ref{l1}.
\end{proof}

Let $L$ be the 3-dimensional linear subspace of $E$ generated by the vectors $b,a_0,a_1$ (from Lemma~\ref{l1}) and let $L^\perp\subset E$ be its orthogonal complement. Then the space $E$ decomposes into the direct sum $L\oplus L^\perp$.

Find a (unique) point $a_2$ in the 2-sphere $L\cap S^n$ such that $\|a_2-a_0\|=\|a_2-a_1\|=\|a_1-a_0\|$ and $\la b,a_2\ra>0$. Let  $c=\frac13(a_0+a_1+a_2)$ be the center of the equilateral triangle $\triangle a_0,a_1,a_2$. It follows from $\la b,a_0\ra=0$ and $0\ge \la b,a_1\ra>-\frac12\|a_0-a_1\|$ that $\la b,a_2\ra>\frac12\|a_0-a_1\|$.
Consequently,
\begin{itemize}
\item[$(*)$] $\la b,c\ra=\frac13(\la b,a_1\ra+\la b,a_2\ra)>0$.
\end{itemize}

\begin{claim}\label{cl2} $\la c,a\ra>0$ for each $a\in A$.
\end{claim}

\begin{proof} Observe that $\la a_2,a_0\ra=\frac12(\|a_2\|^2+\|a_0\|^2-\|a_2-a_0\|^2)\ge\frac12(1+1-1)=\frac12$ and then $\|-a_2-a_0\|^2=\|a_2\|^2+\|a_0\|^2+2\la a_2,a_0\ra\ge 3$, which implies that $-a_2\notin A$ because $\diam(A)\le 1$. Then for each $a\in A$ we get $a_2\ne -a$ and hence $\la a_2,a\ra>-\|a_2\|\cdot\|a\|=-1$.

On the other hand, for $i\in\{0,1\}$ we get $$\la a_i,a\ra=\frac12(\|a_i\|^2+\|a\|^2-\|a_0-a\|^2)\ge \frac12(1+1-1)=\frac12.$$
Then $$\la c,a\ra=\tfrac13\la a_0+a_1+a_2,a\ra=\tfrac13(\la a_0,a\ra+\la a_1,a\ra+\la a_2,a\ra)>\tfrac13(\tfrac12+\tfrac12-1)=0.$$
\end{proof}

Let $R:L\to L$ be the rotation of the 3-dimensional Euclidean space $L$ around the axis $\IR c$ on the angle $2\pi/3$ such that
$R(a_0)=a_1$, $R(a_1)=a_2$ and $R(a_2)=a_0$.
Extend $R$ to an isometry $f\in \SO(n+1)$ of $E=L\oplus L^\perp$ letting $f(x+y)=R(x)+y$ for $(x,y)\in L\times L^\perp$.
It remains to prove:

\begin{claim} The system $\mathcal L=\{A,f(A),f^2(A)\}$ is linked but not centered.
\end{claim}

\begin{proof} The linkedness of the system $\mathcal L$ follows from the inclusion $\{a_0,a_1\}\subset A$ and the linkedness of the system
$$\big\{\{a_0,a_1\},\{a_1,a_2\},\{a_2,a_0\}\big\}=\big\{\{a_0,a_1\},f(\{a_0,a_1\}),f^2(\{a_0,a_1\})\big\}.$$

To see that $\mathcal L$ is not centered, consider the half-spaces
$H_b=\{x\in E:\la b,x\ra\le 0\}$ and $H_c=\{x\in E:\la c,x\ra>0\}$. The choice of the vectors $b,a_0,a_1,a_2$ guarantees that $a_0,a_1\in A\subset H_b$ but $a_2,c\notin H_b$. By Claim~\ref{cl2}, $A\subset H_c$.

Let $H_c^L=H_c\cap L$ and $H_b^L=H_b\cap L$.
The inclusions $b,c\in L$ imply that $H_b=H_b^L\oplus L^\perp$ and $H_c=H_c^L\oplus L^\perp$.

It follows that $R(H_c^L)=H_c^L$ and hence $f(H_c)=H_c$. Observe that $$A\cap f(A)\cap f^2(A)\subset H_c\cap H_b\cap f(H_b)\cap f^2(H_b)=(H_c^L\cap H_b^L\cap R(H_b^L)\cap R^2(H_b^L))\oplus L^\perp.$$

Now to see that $A\cap f(A)\cap f^2(A)=\emptyset$ it suffices to prove that the intersection $H^L=H_c^L\cap H_b^L\cap R(H_b^L)\cap R^2(H_b^L)$ is empty. Assuming that this intersection contains some point $h$, we conclude that it contains its rotations $R(h)$ and $R^2(h)$ and also the center $c_h=\frac13(h+R(h)+R^2(h))$ of the equilateral triangle $\{h,R(h),R^2(h)\}$ (by the convexity of $H^L$). The center $c_h$ lies on the axis $\IR\cdot c$ of the rotation $R$. Taking into account that $c_h\in H_c$, we conclude that $\la c,c_h\ra>0$ and hence $c\in (0,+\infty)\cdot c_h\subset H_b$, which contradicts the inequality $(*)$.
\end{proof}
\end{proof}

\end{document}